\DeclareSymbolFont{largesymbols}{OMX}{yhex}{m}{n}
\DeclareMathAccent{\widehat}{\mathord}{largesymbols}{"62}
\newcommand{\MC}{G} 
\newcommand{\Q}{{\mathbb Q}}
\newcommand{\Z}{{\mathbb Z}}
\newcommand{\F}{{\mathbb F}}
\newcommand{\Gal}{\textnormal{Gal}}
\newcommand{\vect}{\textnormal{vect}}
\newcommand{\Cen}{\textnormal{Cen}}
\newcommand{\kar}{\textnormal{char}}
\newcommand{\Ker}{{\textnormal{Ker\,}}}
\newcommand{\Aut}{\textnormal{Aut}}
\newcommand{\tr}{{\textnormal{tr}}}
\newcommand{\End}{\textnormal{End}}
\newcommand{\inv}{{^{-1}}}
\newcommand{\GEN}[1]{\left\langle #1 \right\rangle}
\newcommand{\U}{\mathcal{U}}
\theoremstyle{plain}
\newtheorem{theorem}{Theorem}[section]
\newtheorem{corollary}[theorem]{Corollary}
\theoremstyle{definition}
\newtheorem{remark}[theorem]{Remark}
\newtheorem{example}[theorem]{Example}
\begin{document}

\title[Construction of minimal non-abelian left group codes]{Construction of minimal non-abelian left group codes}

\author{Gabriela Olteanu} 
\address{Department of Statistics-Forecasts-Mathematics, Babe\c s-Bolyai University,
Str. T. Mihali 58-60, 400591 Cluj-Napoca, Romania}
\email{gabriela.olteanu@econ.ubbcluj.ro}

\author{Inneke Van Gelder}
\address{Department of Mathematics, Vrije Universiteit Brussel,
Pleinlaan 2, 1050 Brussels, Belgium}
\email{ivgelder@vub.ac.be}

\date{\today}

\thanks{The research is supported by the grant PN-II-RU-TE-2009-1 project ID\_303, the grant PN-II-ID-PCE-2012-4-0100 and by the Research
Foundation Flanders (FWO - Vlaanderen).}

\begin{abstract}
Algorithms to construct minimal left group codes are provided. These are based on results describing a complete set of orthogonal primitive idempotents in each Wedderburn component of a semisimple finite group algebra $\F G$ for a large class of groups $G$.

As an illustration of our methods, alternative constructions to some best linear codes over $\F_2$ and $\F_3$ are given. Furthermore, we give constructions of non-abelian left group codes.
\keywords{left group codes \and linear codes \and primitive idempotents \and Wedderburn decomposition \and finite group algebras}

\end{abstract}

\maketitle

\section{Introduction}

In this paper $\F=\F_s$, the finite field with $s$ elements. A linear code over $\F$ of length $n$ and rank $k$ is a linear subspace $C$
with dimension $k$ of the vector space $\F^n$. The standard basis of $\F^n$ is denoted by $E=\{e_1,\ldots,e_n\}$. The vectors in $C$ are called
codewords, the size of a code is the number of codewords and equals $s^k$. The distance of a code is the minimum distance between distinct codewords, i.e. the number of elements in which they differ. The weight of a codeword is the distance to the zero codeword. The weight distribution is listing for each integer $i$ the number of codewords of weight $i$. A linear code of length $n$, dimension $k$, and distance $d$ is called a $[n,k,d]$-code. Bounds on the minimum distance of linear codes are known, see \cite{GUAVA,1998Brouwer,2006Grassl,2007Grassl}. A linear code $C$ can be represented as the $\F^n$-span of a minimal set of codewords, these basis codewords are often collated in the rows of a matrix known as a generating matrix for the code $C$.

For any group $G$, we denote by $\F G$ the group algebra over $G$ with coefficients in $\F$. If $G$ is a group of order $n$ and 
$C\subseteq \F^n$ is a linear code, then we say that $C$ is a left $G$-code (respectively a $G$-code) if there is a bijection $\phi:E\rightarrow G$
such that the linear extension of $\phi$ to an isomorphism $\phi:\F^n\rightarrow \F G$ maps $C$ to a left ideal (respectively a two-sided ideal) of
$\F G$. A left group code (respectively a group code) is a linear code which is a left $G$-code (respectively a $G$-code) for some group $G$. A (left)
cyclic group code (respectively, abelian, metacyclic, nilpotent group code, \ldots) is a linear code which is a (left) $G$-code for some cyclic group
(respectively, abelian, metacyclic, nilpotent group, \ldots) $G$. The underlying group is not uniquely determined by the code itself. That means that it is possible that a (left) non-abelian group code can also be realized as an abelian group code.

In \cite{2009BernalRioSimon,sabin} it is proved that if $C$ is a two-sided metacyclic group code then $C$ is an abelian group code. However an example of a two-sided group code which is not an abelian group codes was recently given in \cite{Pillado2013}. For left group codes, Bernal, del R\'io and Sim\'on proved that for every non-abelian group $G$ and every prime $p$ not dividing the order of $G$ there is a left $G$-code over some field of characteristic $p$ which is not an abelian group code \cite[Proposition 3.3]{2009BernalRioSimon}. 
Since it is more likely to find left group codes which are not abelian group codes, we will study (minimal) left group codes. 

For a metacyclic group $G=\GEN{a,b\mid a^m=1=b^n, ba=a^rb}$ where $\gcd(m,r)=1$, $r^n\equiv 1 \mod m$ and both $m$ and $n$ are odd, Sabin and Lomonaco \cite{sabin} gave an algorithm to determine minimal left codes in $\F G$ where $\F$ has characteristic 2. They discovered several good metacyclic codes and they expressed the hope (also inspired by results on other non-abelian codes \cite{1989ChengSloane}) that more ``good'' and perhaps even ``best'' codes may be discovered among the ideals of non-abelian group rings. As Sabin and Lomonaco did, we obtain an algorithm, but for a larger class of groups and fields, and rediscover some best codes. We also included an implementation of our algorithm in the GAP package Wedderga \cite{Wedderga}.

\section{Preliminaries}\label{pre}

When $R$ is a semisimple ring (i.e. $R$ is a direct sum of a finite number of minimal left ideals), then every left ideal $L$ of $R$ is of the form $L=Re$, where $e$ is an idempotent of $R$. Therefore, we can use the idempotents to characterize the decompositions of semisimple rings as direct sums of minimal left ideals. In particular, let $R=\oplus_{i=1}^t L_i$ be a decomposition of a semisimple ring as direct sums of minimal left ideals. Then, there exists a family $\{e_1,\dots,e_t\}$ of elements of $R$ such that: each $e_i$ is a non-zero idempotent element, if $i\neq j$, then $e_ie_j=0$, $1=e_1+\cdots+e_t$ and each $e_i$ cannot be written as $e_i=e_i'+e_i''$, where $e_i',e_i''$ are idempotents such that $e_i',e_i''\neq 0$ and $e_i'e_i''=0$, $1\leq i\leq t$. Conversely, if there exists a family of idempotents $\{e_1,\dots,e_t\}$ satisfying the previous conditions, then the left ideals $L_i=Re_i$ are minimal and $R=\oplus_{i=1}^t L_i$. Such a set of idempotents is called a complete set of orthogonal primitive idempotents of the ring $R$. Note that such a set is not uniquely determined. When studying left group codes, it is useful to study minimal left group codes, i.e. codes associated with minimal left ideals, and in particular primitive idempotents of finite group algebras.

Recall that, given a decomposition of a semisimple ring $R$ as direct sums of minimal left ideals, we can group isomorphic left ideals
together. 
The sum of all left ideals isomorphic to one in the decomposition, turns out to be a minimal two-sided ideal of $R$ which is simple as a ring. Also
the decomposition of $R$ as direct sums of two-sided ideals is related to a family of idempotents. Let $R=\oplus_{i=1}^s A_i$ be a decomposition of a
semisimple ring as direct sums of minimal two-sided ideals. Then, there exists a family $\{e_1,\dots,e_s\}$ of elements of $R$ such that: each
$e_i$ is a non-zero central idempotent element, if $i\neq j$, then $e_ie_j=0$, $1=e_1+\cdots+e_s$ and each $e_i$ cannot be written as $e_i=e_i'+e_i''$,
where $e_i',e_i''$ are central idempotents such that $e_i',e_i''\neq 0$ and $e_i'e_i''=0$, $1\leq i\leq s$. The elements $\{e_1,\ldots,e_s\}$ are
called the primitive central idempotents of $R$ and they give rise to the well-known Wedderburn-Artin Theorem. Using this knowledge backwards, it can
be helpful to consider the Wedderburn decomposition and the primitive central idempotents in order to determine a complete set of orthogonal primitive
idempotents.

From now on, $G$ denotes an arbitrary finite group such that $\F G$ is semisimple. By Maschke's Theorem this is equivalent to saying that the
order of $G$ is coprime to the characteristic of $\F$. The notation $H\leq G$ (resp. $H\unlhd G$) means that $H$ is a subgroup (resp. normal subgroup)
of $G$. For $H\leq G$, $g\in G$ and $h\in H$, we define $H^g=g^{-1}Hg$ and $h^g=g^{-1}hg$. Analogously, for $\alpha \in \F G$ and $g\in G$,
$\alpha^g=g^{-1}\alpha g$. For $H\leq G$, $N_G(H)$ denotes the normalizer of $H$ in $G$ and we set $\widetilde{H}=|H|^{-1}\sum_{h\in H} h$, an
idempotent of $F G$, and if $H=\langle g\rangle$ then we simply write $\widetilde{g}$ for $\widetilde{\langle g\rangle}$. 

The classical method for computing primitive central idempotents in a semisimple group algebra $\F G$ involves characters of the group $G$. All the characters of any finite group are assumed to be characters in $\overline{\F}$, a fixed algebraic closure of the field $\F$. For an irreducible character $\chi$ of $G$, $e(\chi)=\frac{\chi(1)}{|G|}\sum_{g\in G}\chi(g^{-1})g$ is the primitive central idempotent of $\overline{\F}G$ associated to $\chi$ and $e_{\F}(\chi)$ is the only primitive central idempotent $e$ of $\F G$ such that $\chi(e)\neq 0$. The field of character values of $\chi$ over $\F$ is defined as $\F(\chi)=\F(\chi(g) : g\in G)$, that is the field extension of $\F$ generated over $\F$ by the image of $\chi$. The automorphism group $\Aut(\overline{\F})$ acts on $\overline{\F}G$ by acting on the coefficients, that is $\sigma\sum_{g\in G} a_gg=\sum_{g\in G}\sigma(a_g)g$, for $\sigma\in\Aut(\overline{\F})$ and $a_g\in\overline{\F}$. Following \cite{Yamada1973}, we know that $e_{\F}(\chi)=\sum_{\sigma\in\Gal(\F(\chi)/\F)}\sigma e(\chi)$. 

New methods for the computation of the primitive central idempotents in a group algebra do not involve characters. The main ingredient in this theory is the following element, introduced in \cite{Jespers2003}. If $K \unlhd H\leq G$, then let $\varepsilon(H,K)$ be the element of $\Q H\subseteq \Q G$ defined as 
\begin{eqnarray*}
\varepsilon(H,K)&=&
\left\{\begin{array}{ll}
\widetilde{K} & \mbox{if } H=K, \\
\prod_{M/K\in \mathcal{M}(H/K)}(\widetilde{K}-\widetilde{M}) & \mbox{if } H\neq K,
\end{array}\right.
\end{eqnarray*}
where $\mathcal{M}(H/K)$ denotes the set of minimal normal non-trivial subgroups of $H/K$. Furthermore, $e(G,H,K)$ denotes the sum of the different $G$-conjugates of $\varepsilon(H,K)$. By \cite[Theorem 4.4]{Olivieri2004}, the elements $\varepsilon(H,K)$ are the building blocks for the primitive central idempotents of $\Q G$ for abelian-by-supersolvable groups $G$.

We introduce some notations and results from \cite{Broche2007}. Let $\F$ and $G$ be as before, with $|G|=n$. Throughout the paper, we fix an 
algebraic closure of $\F$, denoted by $\overline{\F}$. For every positive integer $k$ coprime with $s$, $\xi_k$ denotes a primitive $k$-th root of
unity in $\overline{\F}$ and $o_k(s)$ denotes the multiplicative order of $s$ modulo $k$. Recall that $\F(\xi_k)\simeq\F_{s^{o_k(s)}}$, the field of
order $s^{o_k(s)}$. Let $\mathcal{Q}$ denote the subgroup of $\Z_n^*$, the group of units of the ring $\Z_n$, generated by the class of $s$ and
consider $\mathcal{Q}$ acting on $G$ by $t\cdot g=g^t$. The $s$-cyclotomic classes of $G$ are the orbits of $G$ under the action of $\mathcal{Q}$ on
$G$. For a cyclic group $A$, let $A^*$ be the group of irreducible characters in $\overline{\F}$ of $A$ and let $\mathcal{C}(A)$ denote the set of $s$-cyclotomic classes of $A^*$, which consist of linear faithful characters of $A$. 

Let $K\unlhd H\leq G$ be such that $H/K$ is cyclic of order $k$ and $C\in\mathcal{C}(H/K)$. If $\chi\in C$ and $\tr=\tr_{\F(\xi_k)/\F}$ denotes the field trace of the Galois extension $\F(\xi_k)/\F$, then we set $$\varepsilon_C(H,K)=|H|^{-1}\sum_{h\in H} \tr(\chi(hK))h^{-1}=[H:K]^{-1}\widetilde{K}\sum_{X\in H/K}\tr(\chi(X))h_X^{-1} ,$$ where $h_X$ denotes a representative of $X\in H/K$. Note that $\varepsilon_C(H,K)$ does not depend on the choice of $\chi\in C$. Furthermore, $e_C(G,H,K)$ denotes the sum of the different $G$-conjugates of $\varepsilon_C(H,K)$. Note that the elements $\varepsilon_C(H,K)$ will occur in Theorem~\ref{mainfinite} as the building blocks for the primitive central idempotents of finite group algebras.

If $H$ is a subgroup of $G$, $\psi$ a linear character of $H$ and $g\in G$, then $\psi^g$ denotes the character of $H^g$ given by $\psi^g(h^g)=\psi(h)$. This defines an action of $G$ on the set of linear characters of subgroups of $G$. Note that if $K=\Ker\psi$, then $\Ker\psi^g=K^g$ and therefore the rule $\psi\mapsto\psi^g$ defines a bijection between the set of linear characters of $H$ with kernel $K$ and the set of linear characters of $H^g$ with kernel $K^g$. This bijection maps $s$-cyclotomic classes to $s$-cyclotomic classes and hence induces a bijection $\mathcal{C}(H/K)\rightarrow\mathcal{C}(H^g/K^g)$. 

Let $K\unlhd H\leq G$ be such that $H/K$ is cyclic. Then the action from the previous paragraph induces an action of $N=N_G(H)\cap N_G(K)$ on $\mathcal{C}(H/K)$ and it is easy to see that the stabilizer of a cyclotomic class in $\mathcal{C}(H/K)$ is independent of the cyclotomic class. We denote by $E_G(H/K)$ the stabilizer of such (and thus of any) cyclotomic class in $\mathcal{C}(H/K)$ under this action.

A strong Shoda pair of $G$ is a pair $(H,K)$ of subgroups of $G$ satisfying the following conditions:
\begin{itemize}
\item[(SS1)] $K\leq H\unlhd N_G(K)$,
\item[(SS2)] $H/K$ is cyclic and a maximal abelian subgroup of $N_G(K)/K$, and
\item[(SS3)] for every $g\in G\setminus N_G(K)$, $\varepsilon(H,K)\varepsilon(H,K)^g=0$.
\end{itemize}

It is also proven in \cite{Broche2007} that $\Cen_G(\varepsilon_C(H,K))=E_G(H/K)$ in the case when $(H,K)$ is a strong Shoda pair of $G$. The following Theorem gives a description of the primitive central idempotents of $\F G$ given by strong Shoda pairs and the associated simple components.

\begin{theorem}\cite[Theorem 7]{Broche2007}\label{mainfinite}
Let $G$ be a finite group and $\F$ a finite field of order $s$ such that $\F G$ is semisimple. Let $(H,K)$ be a strong Shoda pair of $G$ and $C\in\mathcal{C}(H/K)$. Then $e_C(G,H,K)$ is a primitive central idempotent of $\F G$ and $$\F G e_C(G,H,K)\simeq M_{[G:H]}(\F_{s^{o/[E:H]}}),$$ where $E=E_G(H/K)$ and $o$ is the multiplicative order of $s$ modulo $[H:K]$.
\end{theorem}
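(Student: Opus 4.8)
My plan is to reduce everything to ordinary character theory over $\overline{\F}$ and then descend to $\F$, exploiting that over a finite field every Schur index is $1$ (the Brauer group is trivial). Write $k=[H:K]$, $N=N_G(H)\cap N_G(K)$, $E=E_G(H/K)$, and fix $\lambda\in C$, viewed as a faithful linear character of the cyclic group $H/K$ inflated to $H$. First I would pin down $\varepsilon_C(H,K)$ inside $\F H$: expanding $\tr=\tr_{\F(\xi_k)/\F}$ as a sum over $\Gal(\F(\xi_k)/\F)$ and comparing with $e(\lambda)=\frac{1}{|H|}\sum_{h}\lambda(h)h^{-1}$ gives $\varepsilon_C(H,K)=\sum_{\mu\in C}e(\mu)=e_{\F}(\lambda)$. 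Hence $\varepsilon_C(H,K)$ is the primitive central idempotent of $\F H$ attached to $\lambda$, and since $H/K$ is cyclic and $\F H\widetilde{K}\simeq\F(H/K)$, one gets $\F H\varepsilon_C(H,K)\simeq\F(\xi_k)\simeq\F_{s^{o}}$, where $o=o_k(s)$.

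Next I would establish orthogonality and centrality of the conjugates. For $g\in G\setminus N_G(K)$, condition (SS3) gives $\varepsilon(H,K)\varepsilon(H,K)^{g}=0$ (and also $\varepsilon(H,K)^{g}\varepsilon(H,K)=0$, applying (SS3) to $g^{-1}$); combined with the identity $\varepsilon_C(H,K)\varepsilon(H,K)=\varepsilon_C(H,K)$ this forces $\varepsilon_C(H,K)^{g}\varepsilon_C(H,K)=0$. For $g\in N\setminus E$ the conjugate $\varepsilon_C(H,K)^{g}=\varepsilon_{C'}(H,K)$ with $C'\ne C$, and distinct $s$-cyclotomic classes yield orthogonal primitive idempotents of $\F H\widetilde{K}\simeq\F(H/K)$. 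Using the quoted equality $\Cen_G(\varepsilon_C(H,K))=E$, the distinct $G$-conjugates of $\varepsilon_C(H,K)$ are the $[G:E]$ mutually orthogonal idempotents indexed by $G/E$, so their sum $e_C(G,H,K)$ is a $G$-invariant, hence central, idempotent of $\F G$.

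For primitivity and the matrix size, set $\chi=\mathrm{Ind}_H^G\lambda$. The strong Shoda conditions are precisely what is needed to run Shoda's irreducibility criterion via Mackey's theorem, so $\chi$ is irreducible of degree $\chi(1)=[G:H]$; summing the $\Gal(\overline{\F}/\F)$-orbit of $e(\chi)$ then identifies $e_C(G,H,K)=e_{\F}(\chi)$, which is therefore a primitive central idempotent. Because the Schur index over $\F$ is $1$, we obtain $\F G e_C(G,H,K)\simeq M_{\chi(1)}(\F(\chi))=M_{[G:H]}(\F(\chi))$, and it remains only to compute the field of character values $\F(\chi)$. (Alternatively, one could avoid characters and argue $\F G e_C(G,H,K)\simeq M_{[G:E]}(\F E\varepsilon_C(H,K))$ with $\F E\varepsilon_C(H,K)$ a crossed product of $\F_{s^{o}}$ by $E/H$; triviality of the Brauer group collapses this crossed product to $M_{[E:H]}(\F_{s^{o/[E:H]}})$, and the two matrix layers combine to $M_{[G:H]}(\F_{s^{o/[E:H]}})$.)

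The crux is the computation of $\F(\chi)$. All values $\chi(g)$ are sums of $k$-th roots of unity, so $\F(\chi)\subseteq\F(\xi_k)=\F_{s^{o}}$ and $\Gal(\F_{s^{o}}/\F)=\langle x\mapsto x^{s}\rangle$ is cyclic of order $o$. The automorphism $x\mapsto x^{s^{j}}$ fixes $\chi$ iff $\lambda^{s^{j}}$ is $G$-conjugate to $\lambda$; since both characters have kernel $K$, any such conjugation is by an element of $N$, and it lands in $C$ exactly for $g\in E$. This defines a map $E/H\to\langle s\rangle\simeq\Gal(\F_{s^{o}}/\F)$ sending $g$ to the exponent $j$ with $\lambda^{g}=\lambda^{s^{j}}$; its kernel consists of $g$ centralizing $H/K$, which by (SS2) (maximality, hence self-centrality of $H/K$ in $N_G(K)/K$) forces $g\in H$, so the map is injective with image of order $[E:H]$. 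Therefore the stabilizer of $\chi$ in $\Gal(\F_{s^{o}}/\F)$ has order $[E:H]$, giving $[\F(\chi):\F]=o/[E:H]$ and $\F(\chi)\simeq\F_{s^{o/[E:H]}}$. I expect the two genuine obstacles to be the irreducibility of $\mathrm{Ind}_H^G\lambda$ — where all three strong Shoda conditions are consumed — and exactly this identification of the centre, for which the self-centralizing property of the maximal abelian subgroup $H/K$ is indispensable.
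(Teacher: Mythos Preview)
The paper does not actually prove this theorem: it is quoted verbatim from \cite{Broche2007} and no argument is supplied. What the paper does record, in the paragraph following Theorem~\ref{reiner}, is the structural mechanism behind the cited proof: one shows $\F E\varepsilon_C(H,K)=\F H\varepsilon_C(H,K)*^{\alpha}_{\tau}E/H\simeq\F(\zeta_{[H:K]})*^{\alpha}_{\tau}E/H$, recognises this as a classical crossed product $(\F(\zeta_{[H:K]})/Z,\tau)$ with centre $Z=\F_{s^{o/[E:H]}}$ (the action being faithful because $H/K$ is self-centralising in $N_G(K)/K$), and then invokes triviality of the Brauer group of a finite field to conclude $\F E\varepsilon_C(H,K)\simeq M_{[E:H]}(Z)$; combining with $\F G e_C\simeq M_{[G:E]}(\F E\varepsilon_C)$ gives the stated matrix algebra. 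You mention exactly this route in your parenthetical alternative, so you have in fact reproduced the paper's (i.e.\ Broche--del R\'io's) line of argument there.

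Your primary route is different and also correct: you identify $e_C(G,H,K)$ with $e_{\F}(\chi)$ for $\chi=\mathrm{Ind}_H^G\lambda$, use Shoda/Mackey to get irreducibility, invoke Schur index $1$ over finite fields to obtain $\F G e_C\simeq M_{[G:H]}(\F(\chi))$, and then compute $\F(\chi)$ by describing the Galois stabiliser of $\chi$ as the image of $E/H$. The two approaches are really the same computation seen from opposite ends --- your injective map $E/H\hookrightarrow\Gal(\F_{s^o}/\F)$ is exactly the faithful action $\alpha$ in the crossed-product picture, and ``Schur index $1$'' and ``trivial Brauer group'' are the same fact. The character-theoretic route has the virtue of making the identification $e_C(G,H,K)=e_{\F}(\chi)$ explicit, while the crossed-product route gives the concrete algebra isomorphism that the paper later exploits in Theorem~\ref{idempotents}.
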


\begin{remark}\rm From \cite[Theorem 7]{Broche2007}, we also know that there is a strong relation between the primitive central idempotents in a rational group algebra $\Q G$ and the primitive central idempotents in a finite group algebra $\F G$ that makes use of the strong Shoda pairs of $G$. More precisely, if $X$ is a set of strong Shoda pairs of $G$ and every primitive central idempotent of $\Q G$ is of the form $e(G,H,K)$ for $(H,K)\in X$, then every primitive central idempotent of $\F G$ is of the form $e_C(G,H,K)$ for $(H,K)\in X$ and $C\in\mathcal{C}(H/K)$.
\end{remark}

Let $\chi$ be an irreducible (complex) character of $G$. Then $\chi$ is strongly monomial if there is a strong Shoda pair $(H,K)$ of $G$
and a linear character $\theta$ of $H$ with kernel $K$ such that $\chi=\theta^G$, the induced character of $G$. The group $G$ is strongly monomial if
every irreducible character of $G$ is strongly monomial.

A complete description of the primitive central idempotents and the simple components for strongly monomial groups is given in \cite{Broche2007}.

\begin{corollary}\label{SSP}
If $G$ is a strongly monomial group and $\F$ is a finite field of order $s$ such that $\F G$ is semisimple, then every primitive central idempotent of $\F G$ is of the form $e_C(G,H,K)$ for $(H,K)$ a  strong Shoda pair of $G$ and $C\in\mathcal{C}(H/K)$. Furthermore, for every strong Shoda pair $(H,K)$ of $G$ and every $C\in\mathcal{C}(H/K)$, $$\F Ge_C(G,H,K)\simeq M_{[G:H]}(\F_{s^{o/[E:H]}}),$$ where $E=E_G(H/K)$ and $o$ is the multiplicative order of $s$ modulo $[H:K]$.
\end{corollary}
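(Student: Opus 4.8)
The plan is to deduce both assertions directly from Theorem~\ref{mainfinite} together with the Remark following it, with the hypothesis of strong monomiality entering at exactly one point. The second (``furthermore'') assertion requires essentially no work: for \emph{any} strong Shoda pair $(H,K)$ of $G$ and \emph{any} $C\in\mathcal{C}(H/K)$, Theorem~\ref{mainfinite} already guarantees that $e_C(G,H,K)$ is a primitive central idempotent of $\F G$ and that $\F G e_C(G,H,K)\simeq M_{[G:H]}(\F_{s^{o/[E:H]}})$, with $E=E_G(H/K)$ and $o$ the multiplicative order of $s$ modulo $[H:K]$. Since strong monomiality plays no role here, this part is immediate.

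The substance lies in the completeness statement, that \emph{every} primitive central idempotent of $\F G$ has the prescribed form. I would first pass to the rational group algebra. By definition, $G$ strongly monomial means that every irreducible (complex) character $\chi$ of $G$ equals $\theta^G$ for some strong Shoda pair $(H,K)$ and some linear character $\theta$ of $H$ with kernel $K$. The primitive central idempotents of $\Q G$ are exactly the idempotents $e_{\Q}(\chi)$ as $\chi$ runs over the Galois orbits of irreducible characters, and the strong Shoda pair theory of \cite{Olivieri2004} identifies $e_{\Q}(\theta^G)=e(G,H,K)$ for such a pair. Hence, taking $X$ to be a set of strong Shoda pairs realizing all the irreducible characters of $G$, every primitive central idempotent of $\Q G$ is of the form $e(G,H,K)$ with $(H,K)\in X$.

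With this in hand, the Remark following Theorem~\ref{mainfinite} applies verbatim: whenever every primitive central idempotent of $\Q G$ is an $e(G,H,K)$ for pairs in a fixed set $X$ of strong Shoda pairs, every primitive central idempotent of $\F G$ is an $e_C(G,H,K)$ with $(H,K)\in X$ and $C\in\mathcal{C}(H/K)$. Combining this with the automatic fact established above that each $e_C(G,H,K)$ is itself primitive central, one obtains the claimed parametrization of all primitive central idempotents of $\F G$.

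I expect the only delicate point to be the bookkeeping of the middle step, namely verifying that the strong Shoda pairs produced by strong monomiality account for \emph{all} primitive central idempotents of $\Q G$ rather than merely some of them; this rests on the identity $e_{\Q}(\theta^G)=e(G,H,K)$ from \cite{Olivieri2004} and on the bijection between primitive central idempotents of $\Q G$ and Galois orbits of irreducible characters. Once that correspondence is secured, the transition to $\F G$ is purely formal via the Remark, so no genuine computation over the finite field is required.
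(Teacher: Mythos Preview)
Your proposal is correct and follows exactly the approach implicit in the paper: the corollary is stated immediately after Theorem~\ref{mainfinite}, the Remark, and the definition of strongly monomial, and is meant to follow from these without further argument (the paper cites \cite{Broche2007} rather than writing out a proof). Your derivation---Theorem~\ref{mainfinite} for the simple component description, strong monomiality plus \cite{Olivieri2004} to get all primitive central idempotents of $\Q G$ as $e(G,H,K)$, then the Remark to descend to $\F G$---is precisely the intended one.
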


However, in some cases we have more information on the algebra isomorphism given in the previous Theorem. We can express the simple algebra $\F
Ge_C(G,H,K)$ in terms of a crossed product.

If $R$ is a (not necessarily finite) unital associative ring and $G$ is a group then $R*^{\alpha}_{\tau} G$ denotes a crossed product with 
action $\alpha:G\rightarrow \Aut(R)$ and twisting (a two-cocycle) $\tau:G\times G \rightarrow \U(R)$ (see for example \cite{Passman1989}), i.e.
$R*^{\alpha}_{\tau} G$ is the associative ring $\bigoplus_{g\in G} R u_g$ with multiplication given by the following rules: $u_g a = \alpha_g(a) u_g$
and $u_g u_h = \tau(g,h) u_{gh}$, for $a\in R$ and $g,h\in G$. Recall that a classical crossed
product is a crossed product $L*^{\alpha}_{\tau} G$, where $L/F$ is a finite Galois extension (of not necessarily finite fields), $G = \Gal(L/F)$ is
the Galois group of the field extension $L/F$ and $\alpha$ is the natural action of $G$ on $L$. A classical crossed product $L *^{\alpha}_{\tau} G$ is
denoted by $(L/F,\tau)$ \cite{Reiner1975}. If the twisting $\tau$ is cohomologicaly trivial, then the classical crossed product is isomorphic to a
matrix algebra over its center. Moreover, when $\tau=1$ we get an explicit isomorphism. More precisely, denoting the matrix associated to an
endomorphism $f$ in a basis $B$ as $[f]_B$, we have the following result.

\begin{theorem}\label{reiner}\cite[Corollary 29.8]{Reiner1975}
Let $L/F$ be a finite Galois extension and $n=[L:F]$. The classical crossed product $(L/F,1)$ is isomorphic (as $F$-algebra) to $M_n(F)$. Moreover, an isomorphism is given by
$$\begin{array}{rcccc}
\psi:(L/F,1) & \longrightarrow & \End_F(L) & \longrightarrow  & M_n(F)   \\
xu_{\sigma} & \longmapsto & x'\circ \sigma & \longmapsto & [x'\circ \sigma]_B,
\end{array}$$
for $x\in L$, $\sigma\in \Gal(L/F)$, $B$ an $F$-basis of $L$ and where $x'$ denotes multiplication by $x$ on $L$.
\end{theorem}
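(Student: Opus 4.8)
The plan is to verify directly that the displayed map $\psi$ is an $F$-algebra isomorphism. The identification $\End_F(L)\cong M_n(F)$ supplied by the second arrow is the standard one obtained by reading off the matrix $[f]_B$ of an endomorphism $f$ in the fixed $F$-basis $B$, so the real content lies in the first arrow $(L/F,1)\to\End_F(L)$, $xu_\sigma\mapsto x'\circ\sigma$.

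First I would record the dimensions and well-definedness. Since $L/F$ is Galois, $G=\Gal(L/F)$ has order $n=[L:F]$, so $(L/F,1)=\bigoplus_{\sigma\in G}Lu_\sigma$ is free of rank $n$ as a left $L$-module and hence $\dim_F(L/F,1)=n^2=\dim_F\End_F(L)$. Every element of $(L/F,1)$ is uniquely of the form $\sum_{\sigma\in G}x_\sigma u_\sigma$ with $x_\sigma\in L$, so $\psi$ is a well-defined $F$-linear map once declared on the generators $xu_\sigma$, and it sends the identity $u_1$ (with $1$ the trivial automorphism) to $\mathrm{id}_L$.

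The key algebraic step is multiplicativity. Using the crossed product rules with trivial twisting, $u_\sigma a=\alpha_\sigma(a)u_\sigma=\sigma(a)u_\sigma$ and $u_\sigma u_\tau=u_{\sigma\tau}$, one computes $(xu_\sigma)(yu_\tau)=x\sigma(y)u_{\sigma\tau}$. On the endomorphism side, composing $x'\circ\sigma$ with $y'\circ\tau$ and evaluating at $z\in L$ yields $x\,\sigma(y)\,(\sigma\tau)(z)$, which is exactly $\bigl(x\sigma(y)\bigr)'\circ(\sigma\tau)$ applied to $z$. Hence $\psi\bigl((xu_\sigma)(yu_\tau)\bigr)=\psi(xu_\sigma)\psi(yu_\tau)$ on generators, and $F$-bilinearity extends this to all of $(L/F,1)$; thus $\psi$ is an $F$-algebra homomorphism.

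It then remains to prove that $\psi$ is bijective, and since both sides have the same finite $F$-dimension $n^2$, it suffices to prove injectivity. Suppose $\psi\bigl(\sum_\sigma x_\sigma u_\sigma\bigr)=\sum_\sigma x_\sigma'\circ\sigma=0$ in $\End_F(L)$, that is, $\sum_{\sigma\in G}x_\sigma\,\sigma(z)=0$ for every $z\in L$. The distinct automorphisms $\sigma\in G$ are in particular distinct characters $L^\times\to L^\times$, so by Dedekind's lemma on the linear independence of characters they are linearly independent over $L$; therefore $x_\sigma=0$ for all $\sigma$, and $\psi$ is injective. An injective $F$-algebra homomorphism between $F$-algebras of equal finite dimension is an isomorphism, which completes the argument. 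The only non-formal ingredient is the linear independence of characters invoked for injectivity; the remainder is bookkeeping with the crossed product relations and a dimension count, which I expect to be routine.
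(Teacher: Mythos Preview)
Your argument is correct and is essentially the standard proof: verify that $\psi$ is an $F$-algebra homomorphism using the crossed product relations with trivial twisting, then obtain injectivity from Dedekind's independence of characters and conclude by a dimension count. Note, however, that the paper does not supply its own proof of this statement; it is quoted as \cite[Corollary~29.8]{Reiner1975} and used as a black box, so there is nothing in the paper to compare your proof against beyond the cited reference, whose proof follows the same lines as yours.
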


Let $(H,K)$ be a strong Shoda pair of a group $G$, $C\in\mathcal{C}(H/K)$, $E=E_G(H/K)$ and $\phi:E/H\rightarrow E/K$ a left inverse of the canonical projection $E/K\rightarrow E/H$.
As mentioned in the proofs of \cite{Broche2007}, with ideas from \cite{Olivieri2004}, we know that $\F E\varepsilon_C(H,K)=\F H\varepsilon_C(H,K)*^{\alpha}_{\tau}E/H = \F(\zeta_{[H:K]})*^{\alpha}_{\tau}E/H$ and the action and twisting are given by 
\begin{eqnarray*}
\alpha_{gH}(\zeta_{[H:K]}) &=& \zeta_{[H:K]}^i, \mbox{ if } yK^{\phi(gH)}=y^iK \mbox{ and}\\
\tau(gH,g'H) &=& \zeta_{[H:K]}^j, \mbox{ if }  \phi(gg'H)\inv\phi(gH)\phi(g'H)=y^jK,
\end{eqnarray*}
for $gH,g'H\in E/H$ and integers $i$ and $j$. 
Since the action $\alpha$ is faithful, $\F(\zeta_{[H:K]})*^{\alpha}_{\tau}E/H$ can be described as a classical crossed product $(\F(\zeta_{[H:K]})/Z,\tau)$, where $Z=\F_{s^{o/[E:H]}}$ is the center of the algebra, which is determined by the Galois action $\alpha$. Hence $E/H\simeq \Gal(\F(\zeta_{[H:K]})/Z)$.
If moreover the twisting $\tau$ is trivial, we know a concrete isomorphism $\F Ge_C(G,H,K)\simeq M_{[G:E]}(\F E\varepsilon_C(H,K))\simeq M_{[G:H]}(\F_{s^{o/[E:H]}})$.

Using the description of the primitive central idempotents and the Wedderburn components of a semisimple finite group algebra $\F G$, we were able to describe a complete set of orthogonal primitive idempotents of $\F G$ in the case where $G$ is nilpotent \cite{2011vangelder}. This description will be used in section~\ref{applications} to construct minimal left nilpotent group codes.
\begin{theorem}\cite[Theorem 3.3]{2011vangelder}\label{nilpotent}
Let $\F$ be a finite field and $G$ a finite nilpotent group such that $\F G$ is semisimple. Let $(H,K)$ be a strong Shoda pair of $G$, $C\in\mathcal{C}(H/K)$ and set $e_C=e_C(G,H,K)$, $\varepsilon_C=\varepsilon_C(H,K)$, $H/K=\langle\overline{a}\rangle$, $E=E_G(H/K)$. Let $E_2/K$ and $H_2/K=\langle\overline{a_2}\rangle$ (respectively $E_{2'}/K$ and $H_{2'}/K=\langle\overline{a_{2'}}\rangle$) denote the $2$-parts (respectively $2'$-parts) of $E/K$ and $H/K$ respectively. Then $\langle\overline{a_{2'}}\rangle$ has a cyclic complement $\langle\overline{b_{2'}}\rangle$ in $E_{2'}/K$.

A complete set of orthogonal primitive idempotents of $\F Ge_C$ consists of the conjugates of $\beta_{e_C}=\widetilde{b_{2'}}\beta_2\varepsilon_C$ by the elements of $T_{e_C}=T_{2'}T_2T_E$, where $T_{2'}=\{1,a_{2'},a_{2'}^2,\dots,a_{2'}^{[E_{2'}:H_{2'}]-1}\}$, $T_E$ denotes a right transversal of $E$ in $G$ and $\beta_2$ and $T_2$ are given according to the cases below.

\begin{enumerate}
\item If $H_2/K$ has a complement $M_2/K$ in $E_2/K$ then $\beta_2=\widetilde{M_2}$. Moreover, if $M_2/K$ is cyclic, then there exists $b_2\in E_2$ such that $E_2/K$ is given by the following presentation $$\langle \overline{a_2},\overline{b_2}\mid \overline{a_2}\hspace{1pt}^{2^n}=\overline{b_2}\hspace{1pt}^{2^k}=1, \overline{a_2}\hspace{1pt}^{\overline{b_2}}=\overline{a_2}\hspace{1pt}^r \rangle,$$ and if $M_2/K$ is not cyclic, then there exist $b_2,c_2\in E_2$ such that $E_2/K$ is given by the following presentation $$\langle \overline{a_2},\overline{b_2},\overline{c_2}\mid \overline{a_2}\hspace{1pt}^{2^n}=\overline{b_2}\hspace{1pt}^{2^k}=\overline{c_2}\hspace{1pt}^2=1, \overline{a_2}\hspace{1pt}^{\overline{b_2}}=\overline{a_2}\hspace{1pt}^r, \overline{a_2}\hspace{1pt}^{\overline{c_2}}=\overline{a_2}\hspace{1pt}^{-1}, [\overline{b_2},\overline{c_2}]=1 \rangle,$$ with $r\equiv 1 \mod 4$ (or equivalently $\overline{a_2}\hspace{1pt}^{2^{n-2}}$ is central in $E_2/K$). Then
\begin{enumerate}
\item\label{fid1i} $T_2=\{1,a_2,a_2^2,\dots, a_2^{2^k-1}\}$, if $\overline{a_2}\hspace{1pt}^{2^{n-2}}$ is central in $E_2/K$ (unless $n\leq 1$) and $M_2/K$ is cyclic; and
\item\label{fid1ii}  $T_2=\{1,a_2,a_2^2,\dots,a_2^{d/2-1},a_2^{2^{n-2}},a_2^{2^{n-2}+1},\dots,a_2^{2^{n-2}+d/2-1}\}$, where $d=[E_2:H_2]$, otherwise. 
\end{enumerate}
\item\label{fid2}  If $H_2/K$ has no complement in $E_2/K$, then there exist $b_2,c_2\in E_2$ such that $E_2/K$ is given by the following presentation
\begin{eqnarray*}
\langle \overline{a_2},\overline{b_2},\overline{c_2}&\mid& \overline{a_2}\hspace{1pt}^{2^n}=\overline{b_2}\hspace{1pt}^{2^k}=1, \overline{c_2}\hspace{1pt}^2=\overline{a_2}\hspace{1pt}^{2^{n-1}}, \overline{a_2}\hspace{1pt}^{\overline{b_2}}=\overline{a_2}\hspace{1pt}^r \overline{a_2}\hspace{1pt}^{\overline{c_2}}=\overline{a_2}\hspace{1pt}^{-1},[\overline{b_2},\overline{c_2}]=1 \rangle, \end{eqnarray*}
with $r\equiv 1 \mod 4$. In this case, $\beta_2=\widetilde{b_2}\frac{1+xa_2^{2^{n-2}}+ya_2^{2^{n-2}}c_2}{2}$ and $$T_2=\{1,a_2,a_2^2,\dots, a_2^{2^k-1},c_2,c_2a_2,c_2a_2^2,\dots,c_2a_2^{2^k-1}\},$$ with $x,y\in\F$, satisfying $x^2+y^2=-1$ and $y\neq 0$.
\end{enumerate}
\end{theorem}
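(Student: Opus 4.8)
The plan is to reduce everything to the local algebra $\F E\varepsilon_C$ and then to exploit the direct product structure forced by nilpotency. Since $G$ is nilpotent, so is $E/K$, and it therefore splits as the direct product $E_2/K\times E_{2'}/K$ of its $2$-part and $2'$-part, and likewise $H/K=H_2/K\times H_{2'}/K$. Using the explicit isomorphism $\F Ge_C\cong M_{[G:E]}(\F E\varepsilon_C)$ from the crossed-product discussion preceding the statement, any complete set of orthogonal primitive idempotents of $\F E\varepsilon_C$ yields one of $\F Ge_C$ by spreading it along the diagonal and conjugating by a right transversal of $E$ in $G$; this is exactly the role of the factor $T_E$ in $T_{e_C}$. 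So first I would prove the statement with $(G,e_C)$ replaced by $(E,\varepsilon_C)$. By Theorem~\ref{mainfinite} applied to $E$ (where $(H,K)$ is again a strong Shoda pair and $E_E(H/K)=E$), the algebra $\F E\varepsilon_C\cong M_{[E:H]}(Z)$ is a full matrix algebra over its center $Z=\F_{s^{o/[E:H]}}$, so one needs exactly $[E:H]$ pairwise orthogonal rank-one idempotents. This matches the count $|T_{2'}||T_2|=[E_{2'}:H_{2'}][E_2:H_2]=[E:H]$, and hence $|T_{e_C}|=[G:E][E:H]=[G:H]$, the degree of $\F Ge_C$.

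Next I would produce a single rank-one idempotent of $\F E\varepsilon_C$ as the product $\widetilde{b_{2'}}\beta_2\varepsilon_C$, building it factor by factor along the prime decomposition. Writing $\F E\varepsilon_C=\F(\zeta_{[H:K]})*^{\alpha}_{\tau}E/H$ as a classical crossed product $(\F(\zeta_{[H:K]})/Z,\tau)$, the Galois group $E/H$ and the field $\F(\zeta_{[H:K]})$ split as products of their $2$- and $2'$-parts. On the odd part the relevant Galois group is cyclic, generated by the image of $b_{2'}$, the twisting is trivial, and via the isomorphism of Theorem~\ref{reiner} the averaging element $\widetilde{b_{2'}}$ maps to the projection onto the fixed field $Z$; hence $\widetilde{b_{2'}}$ is a rank-one idempotent of the odd crossed-product factor, and its conjugates by the powers $a_{2'}^i\in T_{2'}$ are the remaining diagonal idempotents. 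The factor $\beta_2$ plays the same role for the $2$-part, and because the $2$- and $2'$-parts commute and their overlap reduces to $Z$, the product $\widetilde{b_{2'}}\beta_2$ is again rank-one.

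The heart of the argument, and the main obstacle, is the construction of $\beta_2$, because the Galois group of a $2$-power cyclotomic extension need not be cyclic and the twisting $\tau$ on the $2$-part can be non-trivial, so Theorem~\ref{reiner} no longer applies directly. I would split into the cases of the statement according to the structure of $E_2/K$. When $H_2/K$ admits a complement $M_2/K$ the twisting is trivial, $\F E_2\varepsilon_C$ is locally a matrix algebra, and the averaging idempotent $\beta_2=\widetilde{M_2}$ projects onto a one-dimensional piece; the sub-cases \eqref{fid1i} and \eqref{fid1ii} differ only in which Galois conjugates one uses, i.e.\ in the choice of $T_2$, dictated by whether $\overline{a_2}^{2^{n-2}}$ is central (equivalently $s\equiv 1\pmod 4$) and whether $M_2/K$ is cyclic. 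When $H_2/K$ has no complement, $E_2/K$ is a generalized quaternion-type extension and the corresponding component is a quaternion algebra $\left(\frac{-1,-1}{Z}\right)$; since $2\mid|G|$ forces $s$ odd, the equation $x^2+y^2=-1$ is solvable in $\F$, and one verifies by direct computation in the crossed product that $\beta_2=\widetilde{b_2}\frac{1+xa_2^{2^{n-2}}+ya_2^{2^{n-2}}c_2}{2}$ (with $y\neq 0$) is the rank-one idempotent splitting this quaternion algebra as $M_2(Z)$.

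In every case I would conclude by checking, again through direct computation in the crossed product, that $\beta_{e_C}=\widetilde{b_{2'}}\beta_2\varepsilon_C$ is idempotent, that its $T_{e_C}$-conjugates are pairwise orthogonal, and that they sum to $e_C$. Combined with the count $|T_{e_C}|=[G:H]$ equal to the matrix degree of $\F Ge_C$, this forces each conjugate to be a primitive idempotent and the whole family to be complete. I expect the non-routine part to be the verification of orthogonality and of the idempotent relation for $\beta_2$ in Case~\eqref{fid2}, where the non-trivial twisting makes the relevant products in $\F E_2\varepsilon_C$ genuinely quaternionic rather than diagonal.
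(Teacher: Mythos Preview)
The paper does not contain a proof of this theorem: it is quoted verbatim from \cite[Theorem~3.3]{2011vangelder} and used as a black box, so there is nothing in the present paper to compare your argument against.

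That said, your outline follows the strategy one would reconstruct from the crossed-product machinery in Section~\ref{pre} and from the proof of Theorem~\ref{idempotents}: reduce from $(G,e_C)$ to $(E,\varepsilon_C)$ via the transversal $T_E$, split $E/K$ into its $2$- and $2'$-parts using nilpotency, handle the odd part by a trivial-twisting argument, and treat the $2$-part by a case analysis on whether $H_2/K$ is complemented. This is the natural route and is consistent with how the surrounding paper argues. One point you pass over too quickly is why conjugation by the \emph{group elements} $a_{2'}^i$ (and the listed elements of $T_2$) already cycles through a full set of rank-one idempotents; compare this with the proof of Theorem~\ref{idempotents}, where in the general trivial-twisting case one must conjugate by the auxiliary element $x_e=\psi^{-1}(PAP^{-1})$ rather than by group elements. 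In the nilpotent setting this works because the action of the complement on $\langle\overline{a}\rangle$ is exactly the Galois action, so powers of $a$ permute a normal basis and hence permute the associated matrix units, but this step deserves an explicit verification rather than the phrase ``are the remaining diagonal idempotents''. The quaternionic Case~\eqref{fid2} is, as you anticipate, the only place where a genuinely new computation is needed.
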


This theorem provided a straightforward implementation in GAP. Nevertheless, in case (~\ref{fid2}), there might occur some difficulties
finding solutions for the equation $x^2+y^2=-1$ for $x,y\in\F$ and $y\neq 0$. However, we were able to overcome this problem (\cite[Remark 3.4]{2011vangelder}). Computations involving strong
Shoda pairs and primitive central idempotents were already provided in the GAP package Wedderga \cite{Wedderga} and we've
included our new algorithms there. 

\section{A complete set of orthogonal primitive idempotents in $\F G$}

Throughout this section we will assume that $\F$ is a finite field of order $s$ and $G$ is a finite group such that the order of $G$ is coprime to $s$. We will focus on simple components of $\F G$ which are determined by a strong Shoda pair $(H,K)$ and a class $C\in\mathcal{C}(H/K)$ such that $\tau(gH,g'H)=1$ for all $g,g'\in E=E_G(H/K)$ (with notation as in section~\ref{pre}). For such a component, we describe a complete set of orthogonal primitive idempotents. This construction is based on the isomorphism of Theorem~\ref{reiner} on classical crossed products with trivial twisting. Such a description, together with the description of the primitive central idempotent $e_C=e_C(G,H,K)$ determining the simple component, yields a complete set of irreducible modules and will be applied in section~\ref{applications} to construct codes.

Before we do so, we need a basis of $\F(\zeta_{[H:K]})/\F(\zeta_{[H:K]})^{E/H}=\F_{s^o}/\F_{s^{o/[E:H]}}$ (with $o$ the multiplicative order of 
$s$ modulo $[H:K]$) of the form $\{w^x\mid x\in E/H\}$ with $w\in \F(\zeta_{[H:K]})$. That such a basis exists follows from the well-known Normal
Basis Theorem which states that if $K/F$ is a finite Galois extension, then there exists an element $w\in K$ such that $\{\sigma(w)\mid \sigma\in
\Gal(K/F)\}$ is an $F$-basis of $K$, a so-called normal basis, whence $w$ is called normal in $K/F$. Recall that $E/H$, the Galois group of $\F_{s^o}$
over $\F_{s^{o/[E:H]}}$, is cyclic and generated by the Frobenius automorphism $x\mapsto x^{s^{o/[E:H]}}$ (see \cite{Roman2006}). Hence if $\beta\in
\F_{s^o}$ is such that the $[E:H]$ elements $\{ \beta, \beta^{s^{o/[E:H]}}, \dots, \beta^{(s^{o/[E:H]})^{{[E:H]}-1}} \} $ are linearly independent,
then this set forms a normal basis for $\F_{s^o}$ over $\F_{s^{o/[E:H]}}$. For a background on the construction of normal bases, see Artin
\cite{1973Artin}, L\"uneburg \cite{1985Luneburg}, Lenstra \cite{1991Lenstra} and Gao \cite{1993Gao}. The construction of normal bases is implemented
in GAP in the method \verb NormalBase .

Now we can state our main result on primitive idempotents.

\begin{theorem}\label{idempotents}
Let $G$ be a finite group and $\F$ a finite field of order $s$ such that $s$ is coprime to the order of $G$. Let $(H,K)$ be a strong Shoda pair of $G$ such that $\tau(gH,g'H)=1$ for all $g,g'\in E=E_G(H/K)$, and let $C\in\mathcal{C}(H/K)$. Let $\varepsilon=\varepsilon_C(H,K)$ and $e=e_C(G,H,K)$. Let $w$ be a normal element of $\F_{s^o}/\F_{s^{o/[E:H]}}$ (with $o$ the multiplicative order of $s$ modulo $[H:K]$) and $B$ the normal basis determined by $w$. Let $\psi$ be the isomorphism between $\F E \varepsilon$ and the matrix algebra $M_{[E:H]}(\F_{s^{o/[E:H]}})$ with respect to the basis $B$ as stated in Theorem~\ref{reiner}.
Let $P,A\in M_{[E:H]}(\F_{s^{o/[E:H]}})$ be the matrices $$P= \left( \begin{array}{rrrrrr}
1 & 1 & 1 & \cdots & 1 & 1\\
1 & -1 & 0 & \cdots & 0 & 0\\
1 & 0 & -1 & \cdots & 0 & 0\\
\vdots & \vdots & \vdots & \ddots & \vdots & \vdots\\
1 & 0 & 0 & \cdots & -1 & 0\\
1 & 0 & 0 & \cdots & 0 & -1\\
\end{array} \right)
\quad \text{and} \quad 
A= \left( \begin{array}{ccccc}
0 & 0 & \cdots & 0 & 1\\
1 & 0 & \cdots & 0 & 0\\
0 & 1 & \cdots & 0 & 0\\
\vdots & \vdots & \ddots & \vdots & \vdots\\
0 & 0 & \cdots & 0 & 0\\
0 & 0 & \cdots & 1 & 0\\
\end{array} \right).$$
Then $$\{x\widetilde{T_1}\varepsilon x^{-1} \mid x\in T_2\GEN{x_e}\}$$ is a complete set of orthogonal primitive idempotents of $\F G e$ where $x_e=\psi^{-1}(PAP^{-1})$, $T_1$ is a transversal of $H$ in $E$ and $T_2$ is a right transversal of $E$ in $G$. By $\widetilde{T_1}$ we denote the element $\frac{1}{|T_1|}\sum_{t\in T_1}{t}$ in $\F G$. 
\end{theorem}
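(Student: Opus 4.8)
The plan is to reduce the problem, via the isomorphism $\psi$ of Theorem~\ref{reiner}, to an explicit computation inside the matrix algebra $M_{[E:H]}(\F_{s^{o/[E:H]}})$, and then to lift the resulting matrix idempotents back to $\F G$ using the decomposition $\F G e \simeq M_{[G:E]}(\F E \varepsilon)$ afforded by the transversal $T_2$.

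Let me think about what needs to happen at each stage. We have $\F E \varepsilon \simeq M_{[E:H]}(\F_{s^{o/[E:H]}})$ via $\psi$, and then $\F G e \simeq M_{[G:E]}(\F E \varepsilon)$ because $e$ is the sum of the $[G:E]$ distinct conjugates of $\varepsilon$ and the transversal $T_2$ permutes them (this is exactly the "matrix unit" structure coming from the strong Shoda pair, as recorded after Theorem~\ref{reiner}). So a complete set of orthogonal primitive idempotents of $\F G e$ of size $[G:H] = [G:E][E:H]$ is obtained by taking a complete set of $[E:H]$ orthogonal primitive idempotents of $\F E \varepsilon$, transporting them along $\psi$ to matrix idempotents, and then conjugating by $T_2$ to spread them across the $[G:E]$ diagonal blocks. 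This matches the claimed index set $T_2 \langle x_e \rangle$: the factor $\langle x_e \rangle$ produces the $[E:H]$ idempotents inside one block, and $T_2$ produces the block copies.

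So the heart of the matter is the single block: I must show that $\{\, y\,\psi^{-1}(E_{11})\, y^{-1} \mid y \in \langle x_e\rangle \,\}$ is a complete set of $[E:H]$ orthogonal primitive idempotents of $\F E \varepsilon$, where $E_{11}$ is the matrix unit and $\psi^{-1}(E_{11}) = \widetilde{T_1}\varepsilon$. The first task is therefore to verify $\psi(\widetilde{T_1}\varepsilon) = E_{11}$: since $\widetilde{T_1}\varepsilon = \frac{1}{|T_1|}\sum_{t\in T_1} t\varepsilon$ averages over a transversal of $H$ in $E$, under $\psi$ this should act as the projection onto the fixed field $\F_{s^{o/[E:H]}}$, and in the normal basis $B = \{w^x\}$ that projection is precisely the rank-one matrix $E_{11}$ up to the normalization built into $P$. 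Indeed $E_{11}$ is a primitive idempotent of the matrix algebra, and its $\langle x_e\rangle$-conjugates are the diagonal matrix units: the key computation is $PAP^{-1} E_{11} (PAP^{-1})^{-1} = PA P^{-1}E_{11}PA^{-1}P^{-1}$, and one checks directly that $P^{-1}E_{11}P$ is the all-ones-type matrix whose $A$-conjugates cycle through the standard idempotents, so that conjugation by the cyclic group $\langle PAP^{-1}\rangle$ of order $[E:H]$ sends $E_{11}$ to $E_{22},\dots,E_{[E:H][E:H]}$. These are mutually orthogonal, sum to the identity $\varepsilon$, and are each primitive (rank one), giving the complete set in one block.

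The main obstacle, and where I would spend the care, is the explicit linear algebra verifying that the specific matrices $P$ and $A$ do the job: one must confirm that $A$ has order $[E:H]$ and that $P^{-1}E_{11}P$ together with the cyclic action of $A$ yields exactly the diagonal matrix units, rather than some other incomplete or non-orthogonal family. Concretely, I would compute $P^{-1}$ and the products $A^i (P^{-1}E_{11}P) A^{-i}$ for $i=0,\dots,[E:H]-1$, and check these are the $[E:H]$ orthogonal rank-one idempotents summing to the identity. Once the single-block statement is established, the passage to all of $\F G e$ is formal: conjugating the block idempotents by $T_2$ yields idempotents supported in the distinct conjugate components $\F G (\varepsilon)^{t}$, hence orthogonal across blocks and summing to $e$; their total number is $[E:H][G:E]=[G:H]$, matching the number of simple factors (each of dimension $[G:H]$ over its center). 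I would also remark that primitivity survives conjugation since conjugation by a unit is an algebra automorphism, so no primitivity needs to be re-checked after the $T_2$-spread.
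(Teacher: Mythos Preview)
Your overall strategy coincides with the paper's: reduce to $\F E\varepsilon$ via the $T_2$-conjugation, transport the problem through $\psi$ to $M_n(\F_{s^{o/[E:H]}})$ with $n=[E:H]$, and show that the $\langle PAP^{-1}\rangle$-conjugates of the image of $\widetilde{T_1}\varepsilon$ form a complete set of orthogonal primitive idempotents. However, there is a genuine error in your key identification. You assert $\psi(\widetilde{T_1}\varepsilon)=E_{11}$, but this is false. Under $\psi$ each $t\varepsilon$ with $t\in T_1$ acts on the normal basis $B=\{w^{gH}:g\in T_1\}$ by the regular permutation $w^{gH}\mapsto w^{tgH}$, so $\psi(t\varepsilon)$ is a permutation matrix; averaging over $T_1$ therefore gives the matrix $\tfrac{1}{n}J$ (all entries $\tfrac{1}{n}$), not $E_{11}$. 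The role of $P$ is precisely to diagonalize $\tfrac{1}{n}J$: one has $\psi(\widetilde{T_1}\varepsilon)=\tfrac{1}{n}J=PE_{11}P^{-1}$.

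This is not a harmless relabelling, because it invalidates your cycling claim. You write that conjugation by $PAP^{-1}$ sends $E_{11}$ to $E_{22},\dots,E_{nn}$, via the intermediate step that $A$-conjugation of $P^{-1}E_{11}P$ cycles through the standard idempotents. But $P^{-1}E_{11}P=\tfrac{1}{n}J$ as well, and since $A$ is a permutation matrix one has $A\bigl(\tfrac{1}{n}J\bigr)A^{-1}=\tfrac{1}{n}J$; hence $(PAP^{-1})E_{11}(PAP^{-1})^{-1}=E_{11}$, and the conjugates do not move at all. The correct computation is the one the paper does: starting from $\psi(\widetilde{T_1}\varepsilon)=PE_{11}P^{-1}$, conjugation by $PAP^{-1}$ yields $P A E_{11}A^{-1}P^{-1}=PE_{22}P^{-1}$, and iterating gives the complete family $\{PE_{ii}P^{-1}:1\le i\le n\}$, which is visibly a complete set of orthogonal primitive idempotents. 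With this correction your outline becomes the paper's proof.
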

\begin{proof}
Consider the simple component $$\F Ge \simeq M_{[G:E]}(\F E \varepsilon)\simeq M_{[G:H]}( \F_{s^{o/[E:H]}})$$ of $\F G$. Without loss of generality we may assume that $G=E$. Indeed, if we obtain a complete set of orthogonal primitive idempotents of $\F E\varepsilon$, then the conjugates by the transversal $T_2$ of $E$ in $G$ will give a complete set of orthogonal primitive idempotents of $\F Ge$ since $e=\sum_{t\in T_2}\varepsilon^t$ and different $\varepsilon^t$'s are orthogonal.

From now on we assume that $G=E$ and $e=\varepsilon$ and denote $n=[E:H]$. Then $B=\{w^{gH} : g\in T_1\}$. Since $G/H$ acts on $\F He$ via the 
induced conjugation action on $H/K$, it is easily seen that the action of $G/H$ on $B$ is regular. Hence it is readily verified that for each
$g\in T_1$, $\psi(ge)$ is a permutation matrix, and
$$\psi(\widetilde{T_1}e)=\frac{1}{n}\left( \begin{array}{ccccc}
1 & 1 & \cdots & 1 & 1\\
1 & 1 & \cdots & 1 & 1\\
1 & 1 & \cdots & 1 & 1\\
\vdots & \vdots & \ddots & \vdots & \vdots\\
1 & 1 & \cdots & 1 & 1\\
1 & 1 & \cdots & 1 & 1\\
\end{array} \right).$$
Clearly $\psi(\widetilde{T_1}e)$ has eigenvalues $1$ and $0$, with respective eigenspaces $V_1=\vect\{(1,1,\dots,1)\}$ and $V_0=\vect\{(1,-1,0,\dots,0),(1,0,-1,\dots,0),\dots,(1,0,0,\dots,-1)\}$, where $\vect(S)$ denotes the vector space generated by the set $S$. Hence $$\psi(\widetilde{T_1}e)=PE_{11}P^{-1},$$ where we denote by $E_{ij}\in M_{n}(\F_{s^{o/[E:H]}})$ the matrices whose entries are all 0 except in the $(i,j)$-spot, where it is 1. One knows that $\{E_{11},E_{22},\dots,E_{nn}\}$ and hence also $$\{\psi(\widetilde{T_1}e)=PE_{11}P^{-1},PE_{22}P^{-1},\dots,PE_{nn}P^{-1}\}$$ forms a complete set of orthogonal primitive idempotents of $M_{n}(\F_{s^{o/[E:H]}})$. Let $y=\psi(x_e)=PAP^{-1}$. As $$E_{22}=AE_{11}A^{-1}, \dots, E_{nn}=A^{n-1}E_{11}A^{-n+1}$$ we obtain that $$\{\psi(\widetilde{T_1}e),y\psi(\widetilde{T_1}e)y^{-1}, \dots, y^{n-1}\psi(\widetilde{T_1}e)y^{-n+1}\}$$ forms a complete set of orthogonal primitive idempotents of $M_{n}(\F_{s^{o/[E:H]}})$. Hence, applying $\psi^{-1}$ gives us a complete set of orthogonal primitive idempotents of $\F G e$.
\end{proof}

This method yields a detailed description of a complete set of orthogonal primitive idempotents of $\F G$ when $G$ is a strongly monomial group 
such that there exists a complete and non-redundant set of strong Shoda pairs $(H,K)$ satisfying $\tau(gH,g'H)=1$ for all $g,g'\in E_G(H/K)$. Remark
that similar techniques are used in \cite{2013JdROVG} to construct a complete set of orthogonal primitive idempotents of the rational group
algebra $\Q G$ with $G$ as before. For example, the symmetric group $S_4$ and the alternating group $A_4$ of degree 4 have a trivial twisting in all
Wedderburn components of their group rings. Trivially, all abelian groups are included and it is also easy to prove that for all dihedral groups
$D_{2n}=\GEN{a,b\mid a^n=b^2=1,\ a^b=a^{-1}}$ there exists a complete and non-redundant set of strong Shoda pairs with trivial twisting since the
group action involved has order 2 and hence is faithful. On the other hand, for quaternion groups $Q_{4n}=\GEN{x,y \mid x^{2n} = y^4 = 1,\ x^n = y^2,\
x^y = x^{-1}}$, one can verify that the strong Shoda pair $(\GEN{x},1)$ yields a non-trivial twisting.

Even when the group is not strongly monomial or some strong Shoda pairs yield a non-trivial twisting, our description of primitive idempotents can still be used in the components determined by a strong Shoda pair with trivial twisting. This implies that we can always compute some minimal left $G$-codes over a finite field $\F$ for a finite group $G$ of order coprime to $\kar(\F)$.

\section{A class of metacyclic groups}

In this section, we show that our main result can be applied to the metacyclic groups of the form $C_{q^m}\rtimes C_{p^n}$ with $C_{p^n}$ acting faithfully on $C_{q^m}$ and $p$ and $q$ different primes, and fields $\F$ of size $s$ coprime to $pq$. 

Throughout this section $p$ and $q$ are different primes, $m$ and $n$ are positive integers and $G=\GEN{a}\rtimes \GEN{b}$ with $|a|=q^m$, $|b|=p^n$ 
and $\GEN{b}$ acts faithfully on $\GEN{a}$ (i.e. the centralizer of $a$ in $\GEN{b}$ is trivial). Let $\sigma$ be the automorphism of $\GEN{a}$ given by $\sigma(a)=a^b$ and
assume that $\sigma(a)=a^r$ with $r\in \Z$. As the kernel of the restriction map $\Aut(\GEN{a})\rightarrow \Aut\left(\GEN{a^{q^{m-1}}}\right)$ has
order $q^{m-1}$, it intersects $\GEN{\sigma}$ trivially and therefore the restriction of $\sigma$ to $\GEN{a^{q^{m-1}}}$ also has order $p^n$. This
implies that $q\equiv 1 \mod p^n$ and thus $q$ is odd. Therefore, $\Aut\left(\GEN{a^{q^j}}\right)$ is cyclic for every $j=0,1,\dots,m$ and
$\GEN{\sigma}$ is the unique subgroup of $\Aut(\GEN{a})$ of order $p^n$. So, for every $i=1,\dots,m$, the image of $r$ in $\Z/q^i \Z$ generates the
unique subgroup of $\U(\Z/q^i \Z)$ of order $p^n$. In particular, $r^{p^n}\equiv 1 \mod q^m$ and $r^{p^j}\not\equiv 1 \mod q$ for every
$j=0,\dots,n-1$. Therefore, $r\not\equiv 1 \mod q$ and hence $\MC'=\GEN{a^{r-1}}=\GEN{a}$. 

In \cite{Olivieri2004} more information was obtained on the strong Shoda pairs needed to describe the primitive central idempotents of the rational (and hence of a semisimple finite) group algebra of a finite metabelian group. We recall the statement.

\begin{theorem}\cite[Theorem 4.7]{Olivieri2004}\label{SSPmetabelian}
Let $G$ be a finite metabelian group and let $A$ be a maximal abelian subgroup of $G$ containing the commutator subgroup $G'$. The primitive central idempotents of $\Q G$ are the elements of the form $e(G,H,K)$, where $(H,K)$ is a pair of subgroups of $G$ satisfying the following conditions:
\begin{enumerate}
\item \label{metabelian1}$H$ is a maximal element in the set $\{B\leq G \mid A\leq B \mbox{ and } B'\leq K\leq B\}$;
\item \label{metabelian2}$H/K$ is cyclic.
\end{enumerate}
\end{theorem}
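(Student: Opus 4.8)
The plan is to play the special structure of the metabelian situation against the general description of primitive central idempotents for abelian-by-supersolvable groups. The first observation is structural and is used throughout: since $\MC'\leq A$, the subgroup $A$ is normal in $\MC$ with $\MC/A$ abelian, and any pair $(H,K)$ as in the statement satisfies $A\leq H$, whence $\MC'\leq H$ and $H\unlhd \MC$; taking $B=H$ in condition~(\ref{metabelian1}) gives $H'\leq K\leq H$, so $K\unlhd H$ and $H/K$ is abelian, indeed cyclic by~(\ref{metabelian2}). Because a finite metabelian group is abelian-by-supersolvable, \cite[Theorem 4.4]{Olivieri2004} tells us that the primitive central idempotents of $\Q \MC$ are exactly the $e(\MC,H,K)$ attached to strong Shoda pairs $(H,K)$. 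It therefore suffices to prove two things: that every pair satisfying (\ref{metabelian1})--(\ref{metabelian2}) is a strong Shoda pair, and that, conversely, every primitive central idempotent is $e(\MC,H,K)$ for some pair satisfying (\ref{metabelian1})--(\ref{metabelian2}).

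For the forward direction I would verify (SS1)--(SS3) directly. Condition (SS1) is automatic since $H\unlhd\MC$ forces $H\unlhd N_\MC(K)$. For (SS2), cyclicity is (\ref{metabelian2}); and if $L/K$ were an abelian subgroup of $N_\MC(K)/K$ properly containing $H/K$, then $L$ would satisfy $A\leq H\leq L$ and $L'\leq K\leq L$, contradicting the maximality (\ref{metabelian1}) of $H$, so $H/K$ is maximal abelian. The heart of the matter is (SS3) together with irreducibility of the induced character. Fix a faithful linear character $\lambda$ of the cyclic group $H/K$, viewed as a character of $H$ with kernel $K$, and consider its inertia group $I_\MC(\lambda)$. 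The key computation is that $I_\MC(\lambda)=H$: if $g\in I_\MC(\lambda)\setminus H$, then $B=\GEN{H,g}$ satisfies $A\leq B$, and since $[g,H]\leq K$ and $H'\leq K$ one checks that $B/K$ is abelian, i.e. $B'\leq K$, so $B\supsetneq H$ lies in the set of (\ref{metabelian1}), contradicting maximality. Thus $\lambda^{\MC}$ is irreducible (so $(H,K)$ is a Shoda pair), and for (SS3) one uses normality of $H$ to write $\varepsilon(H,K)^{g}=\varepsilon(H,K^{g})$; for $g\notin N_\MC(K)$ we have $K^{g}\neq K$, and $\varepsilon(H,K)$, $\varepsilon(H,K^{g})$ are distinct primitive central idempotents of $\Q H$ (their simple components are the fields of faithful characters of $H/K$ and $H/K^{g}$, whose kernels $K\neq K^{g}$ differ), hence orthogonal.

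For the converse, let $e$ be a primitive central idempotent of $\Q\MC$ and $\chi$ an irreducible complex character with $e=e_{\Q}(\chi)$. Since $A$ is abelian, $\chi_A$ has a linear constituent $\lambda$. I would then choose $H$ of maximal order among the subgroups $B$ with $A\leq B\leq\MC$ admitting a linear constituent $\lambda_H$ of $\chi_B$ with $\lambda_H|_{A}=\lambda$, and set $K=\ker\lambda_H$. The same inertia argument, read backwards, shows $I_\MC(\lambda_H)=H$: otherwise an element $g\in I_\MC(\lambda_H)\setminus H$ gives a cyclic quotient $\GEN{H,g}/H$, so the $\GEN{H,g}$-invariant character $\lambda_H$ extends to a linear character of $\GEN{H,g}$, and by Gallagher's theorem some such extension is a constituent of $\chi$ restricted to $\GEN{H,g}$, contradicting maximality of $H$. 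Hence, as $H\unlhd\MC$ and $I_\MC(\lambda_H)=H$, Clifford theory gives that $\lambda_H^{\MC}$ is irreducible, and since it shares the constituent $\lambda_H$ with $\chi$ we get $\lambda_H^{\MC}=\chi$. Finally $H/K$ is cyclic, being a subgroup of $\mathbb{C}^{*}$, and $H$ is maximal in the set of (\ref{metabelian1}): if $H\lneq B$ with $A\leq B$ and $B'\leq K$, then $[B,H]\leq B'\leq K=\ker\lambda_H$ forces every element of $B$ to fix $\lambda_H$, i.e. $B\leq I_\MC(\lambda_H)=H$, a contradiction. Thus $(H,K)$ satisfies (\ref{metabelian1})--(\ref{metabelian2}) and, by the forward direction, is a strong Shoda pair with $e(\MC,H,K)=e_{\Q}(\lambda_H^{\MC})=e$.

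The main obstacle, and the single computation on which both directions turn, is the identification of condition~(\ref{metabelian1}) with the statement that $I_\MC(\lambda)=H$: the commutator inclusion $[B,H]\leq B'\leq K$ is exactly what converts the abelian-maximality of $H/K$ into the assertion that no element outside $H$ stabilises $\lambda$. The normality $H\unlhd\MC$ (a free gift of $\MC'\leq A\leq H$) is what makes this usable, collapsing the Mackey double cosets to ordinary cosets and reducing the irreducibility of $\lambda^{\MC}$ to a bare inertia computation; the only genuinely character-theoretic input needed beyond this is the extendibility of an invariant character across a cyclic quotient together with Gallagher's description of the constituents lying over it.
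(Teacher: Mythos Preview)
The paper does not give a proof of this theorem: it is quoted verbatim from \cite[Theorem~4.7]{Olivieri2004} and used as a black box to list the strong Shoda pairs of $C_{q^m}\rtimes C_{p^n}$. There is therefore no ``paper's own proof'' to compare against.

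That said, your argument is correct and is essentially the standard proof of the Olivieri--del R\'io--Sim\'on result. The pivot is exactly what you identify: for a pair $(H,K)$ with $A\leq H$ and $H'\leq K$, maximality of $H$ in the set of condition~(\ref{metabelian1}) is equivalent to $I_G(\lambda)=H$ for any linear character $\lambda$ of $H$ with kernel $K$, and the inclusion $G'\leq A\leq H$ makes $H$ normal in $G$, which both collapses Mackey's criterion to a bare inertia computation and lets one write $\varepsilon(H,K)^g=\varepsilon(H,K^g)$ for~(SS3). Your verification of~(SS3) via orthogonality of distinct primitive central idempotents of $\Q H$ is clean; one should note (as you implicitly use) that $K^g\unlhd H$ and $H/K^g$ is cyclic because conjugation by $g$ is an automorphism of the normal subgroup $H$ carrying $K$ to $K^g$. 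The converse direction, choosing $H$ maximal among subgroups containing $A$ that carry a linear constituent of $\chi$, and invoking extendibility across a cyclic quotient plus Gallagher to push the contradiction, is again the natural line and matches the argument in~\cite{Olivieri2004}.
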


Using this description of the strong Shoda pairs, we get a complete and non-redundant set of strong Shoda pairs of $\MC$ consisting of two types:
\begin{enumerate}[label=\rm(\roman{*}), ref=\roman{*}]
\item \label{SP1} $\left(\MC,L_i:=\GEN{a,b^{p^i}}\right), \; i=0,\dots,n$,\\
\item \label{SP2} $\left(\GEN{a},K_j:=\GEN{a^{q^j}}\right), \; j=1,\dots,m$.
\end{enumerate}

It is easy to verify that for these strong Shoda pairs the corresponding twisting is trivial. Hence we can describe a complete set of orthogonal primitive idempotents in each simple component of $\F \MC$ using Theorem~\ref{idempotents}.

\section{Examples of minimal left group codes}\label{applications}

In this section we will provide some illustrative examples of minimal left group codes making use of the computation of primitive idempotents. 
For these examples we used the computer algebra system GAP \cite{GAP} and the packages GUAVA \cite{GUAVA} and Wedderga \cite{Wedderga}. The
implementation of the used methods (based on Theorem~\ref{nilpotent} and Theorem~\ref{idempotents}) is now included in Wedderga. 

Note that each element $c$ in $\F G$ is of the form $c=\sum_{i=1}^n f_i g_i$, where we fix an ordering $\{g_1,g_2,\dots,g_n \}$ of the group elements
of $G$ and $f_i\in \F$. If we look at $c$ as a codeword, we will rather write $[f_1 f_2 \ldots f_n]$.

\begin{example}
We consider the finite group algebra $\F_2G$ over the nilpotent metacyclic group $$G=\GEN{a,b\mid a^9=1,b^3=1,ba=a^4b}$$ and fix an ordering $S$ of $G$.
 \begin{lstlisting}[frame=trbl]
gap> A:=FreeGroup("a","b");;a:=A.1;;b:=A.2;;
gap> G:=A/[a^9,b^3,b*a*b^(-1)*a^(-4)];;
gap> F:=GF(2);;
gap> FG:=GroupRing(F,G);;
gap> S:=AsSet(G);;
 \end{lstlisting}

Using Theorem~\ref{SSPmetabelian}, we see that in the Wedderburn decomposition of $\F_2G$ only one simple component ($M_3(\F_4)$) can possibly contribute to a non-abelian left group code, given by the strong Shoda pair $(H,K)=(\GEN{a},1)$. To define the primitive central idempotent of $\F_2G$ associated to this simple component, we have to define an $s$-cyclotomic class of irreducible $\overline{\F_2}$-characters of $H/K$, which consists of linear faithful characters. All these linear faithful characters are defined by sending the generator of $H/K$ to a power (coprime to $[H:K]$) of a fixed primitive $[H:K]$-root of unity. Using the generator of $H/K$, such a cyclotomic class can be represented by an $s$-cyclotomic class modulo $[H:K]$, which consists of integers coprime to $[H:K]$. With this information we can compute a complete set of orthogonal primitive idempotents in the simple
component $\F_2Ge_C(G,H,K)$.

\begin{lstlisting}[frame=trbl]
gap> H:=Subgroup(G,[G.1]);;
gap> K:=Subgroup(G,[]);;
gap> N:=Normalizer(G,K);;
gap> epi:=NaturalHomomorphismByNormalSubgroup(N,K);;
gap> QHK:=Image(epi,H);;
gap> gq:=MinimalGeneratingSet(QHK)[1];;
gap> C:=CyclotomicClasses(Size(F),Index(H,K))[2];;
gap> P:=PrimitiveIdempotentsNilpotent(FG,H,K,C,[epi,gq]);;
\end{lstlisting}

Using the first primitive idempotent $e$, we can consider the left ideal $\F_2Ge$ of $\F_2G$ and compute its corresponding code.
\begin{lstlisting}[frame=trbl]
gap> e:=P[1];;
gap> CodeWordByGroupRingElement(F,S,e);
[ 0*Z(2), 0*Z(2), 0*Z(2), 0*Z(2), 0*Z(2), 0*Z(2), 0*Z(2), 0*Z(2), 0*Z(2), 
  0*Z(2), 0*Z(2), 0*Z(2), 0*Z(2), 0*Z(2), 0*Z(2), 0*Z(2), 0*Z(2), 0*Z(2), 
  0*Z(2), 0*Z(2), 0*Z(2), Z(2)^0, 0*Z(2), 0*Z(2), 0*Z(2), 0*Z(2), Z(2)^0 ]
gap> Ge := List(G,g->g*e);; time;
4
gap> B := List(Ge,x->CodeWordByGroupRingElement(F,S,x));;
gap> code := GeneratorMatCode(B,F);
a linear [27,18,1..2]3..9 code defined by generator matrix over GF(2)
gap> MinimumDistance(code);
2
gap> DisplayProfile();
  count  self/ms  chld/ms  stor/kb  chld/kb  package  function                     
     28       32        0     9692        0  (oprt.)  CodeWordByGroupRingElement   
      1      152        0    17038        0  (oprt.)  PrimitiveIdempotentsNilpotent
              12               234                    OTHER                        
             196             26966                    TOTAL 
\end{lstlisting}
In this way we have constructed a linear $[27,18,2]$-code over $\F_2$ by means of a minimal left nilpotent metacyclic group code.
We remark that starting with a different strong Shoda pair, determining the same Wedderburn component, can yield another code with different parameters. For example when taking the strong Shoda pair $(\GEN{b,a^3},\GEN{b})$, the obtained code is a linear $[27,6,6]$-code over $\F_2$.
Although the above code is constructed by a metacyclic group, it can also be obtained as a $C_{27}$-group code.

The time consuming parts in this example are the computations of the idempotents and of the translation of the group ring elements into code words, as displayed by the function \verb+DisplayProfile+.
\end{example}


The following is an example of a left abelian-by-metacyclic group code which turns out to be a best linear code (as one can check using \cite{GUAVA}, \cite{1998Brouwer} or \cite{2007Grassl}), i.e. a code which reaches the maximum bound on the minimum distance. 
This allows an alternative construction of a linear $[105,3,60]$-code over $\F_2$.
\begin{example}
We consider the group ring $\F_2G$ over the group $$G=\GEN{a,b\mid a^7=1,b^3=1,c^5=1,ba=a^4b,[a,c]=1,[b,c]=1}.$$
\begin{lstlisting}[frame=trbl]
gap> A:=FreeGroup("a","b","c");;a:=A.1;;b:=A.2;;c:=A.3;;
gap> G:=A/[a^7,b^3,c^5,b*a*b^(-1)*a^(-4),c*a*c^(-1)*a^(-1),c*b*c^(-1)*b^(-1)];; 
gap> F:=GF(2);;
gap> FG:=GroupRing(F,G);;
gap> S:=AsSet(G);;      
gap> H:=StrongShodaPairs(G)[5][1];;
gap> K:=StrongShodaPairs(G)[5][2];;
gap> N:=Normalizer(G,K);;
gap> epi:=NaturalHomomorphismByNormalSubgroup(N,K);;
gap> QHK:=Image(epi,H);;
gap> gq:=MinimalGeneratingSet(QHK)[1];;
gap> C:=CyclotomicClasses(Size(F),Index(H,K))[2];;
gap> P:=PrimitiveIdempotentsTrivialTwisting(FG,H,K,C,[epi,gq]);;
gap> e:=P[1];;        
gap> Ge := List(G,g->g*e);; time;
3984
gap> B := List(Ge,x->CodeWordByGroupRingElement(F,S,x));;
gap> code := GeneratorMatCode(B,F);
a linear [105,3,1..60]51..52 code defined by generator matrix over GF(2)
gap> MinimumDistance(code);
60
gap> LowerBoundMinimumDistance(105,3,2);
60
gap> UpperBoundMinimumDistance(105,3,2);
60
gap> DisplayProfile();
  count  self/ms  chld/ms  stor/kb  chld/kb  package  function                     
    105    23793        0  7412626        0  (oprt.)  CodeWordByGroupRingElement   
      1    36299        0  118184*        0  (oprt.)  PrimitiveIdempotentsTrivial*
               4               229                    OTHER                        
           60096           192313*                    TOTAL 
\end{lstlisting}
In this way we have constructed a best $[105,3,60]$-code by means of a minimal left abelian-by-metacyclic group code. Although it is unclear whether this code can be realized by an abelian group or not. We found a $C_{105}$-group code with the same weight distribution as in the example. Since the high length we were not able to determine whether these codes are permutation equivalent or not.
\end{example}

The following example is one of a left metacyclic group code. This allows an alternative construction of a linear $[20,4,12]$-code over $\F_3$.
\begin{example}
We consider the group ring $\F_3G$ over the group $$G=\GEN{a,b\mid a^5=1,b^4=1,ba=a^2b}.$$
\begin{lstlisting}[frame=trbl]
gap> A:=FreeGroup("a","b");;a:=A.1;;b:=A.2;;
gap> G:=A/[a^5,b^4,b*a*b^(-1)*a^(-2)];;
gap> F:=GF(3);;         
gap> FG:=GroupRing(F,G);;
gap> S:=AsSet(G);;
gap> H:=StrongShodaPairs(G)[4][1];;
gap> K:=StrongShodaPairs(G)[4][2];; 
gap> N:=Normalizer(G,K);;
gap> epi:=NaturalHomomorphismByNormalSubgroup(N,K);;
gap> QHK:=Image(epi,H);;
gap> gq:=MinimalGeneratingSet(QHK)[1];;
gap> C:=CyclotomicClasses(Size(F),Index(H,K))[2];;
gap> P:=PrimitiveIdempotentsTrivialTwisting(FG,H,K,C,[epi,gq]);;
gap> e1:=P[3];;   
gap> Ge1 := List(G,g->g*e1);; time;
160
gap> B1 := List(Ge1,x->CodeWordByGroupRingElement(F,S,x));;
gap> code1 := GeneratorMatCode(B1,F);
a linear [20,4,1..8]8..13 code defined by generator matrix over GF(3)
gap> MinimumDistance(code1);
8
gap> e2:=P[2];;   
gap> Ge2 := List(G,g->g*e2);; time;
72
gap> B2 := List(Ge2,x->CodeWordByGroupRingElement(F,S,x));;
gap> code2 := GeneratorMatCode(B2,F);
a linear [20,4,1..12]8..13 code defined by generator matrix over GF(3)
gap> MinimumDistance(code2);
12
gap> LowerBoundMinimumDistance(20,4,3);
12
gap> UpperBoundMinimumDistance(20,4,3);
12
gap> DisplayProfile();
  count  self/ms  chld/ms  stor/kb  chld/kb  package  function                     
     40      360        0   112511        0  (oprt.)  CodeWordByGroupRingElement   
      1     1504        0   430439        0  (oprt.)  PrimitiveIdempotentsTrivial*
               8                88                    OTHER                        
            1872            543040                    TOTAL 
\end{lstlisting}
In this way we have constructed a $[20,4,8]$-code and a best $[20,4,12]$-code by means of minimal left metacyclic group codes. Notice that the choice of the primitive idempotent is crucial to obtain a best code.
We also checked that the $[20,4,8]$-code cannot be realized by an abelian group. However we found a $C_{20}$-code with the same weight distribution as the $[20,4,12]$-code, but we were not able to determine whether these codes are permutation equivalent or not.
\end{example}

The following example is one of a left metacyclic group code over $\F_2$ which is not an abelian group code. 
\begin{example}
We consider the group ring $\F_2G$ over the group $$G=\GEN{a,b\mid a^{11}=1,b^5=1,ba=a^3b}.$$
\begin{lstlisting}[frame=trbl]
gap> A:=FreeGroup("a","b");;a:=A.1;;b:=A.2;;
gap> G:=A/[a^11,b^5,b*a*b^(-1)*a^(-3)];;
gap> F:=GF(2);;         
gap> FG:=GroupRing(F,G);;
gap> S:=AsSet(G);;
gap> H:=StrongShodaPairs(G)[3][1];;
gap> K:=StrongShodaPairs(G)[3][2];; 
gap> N:=Normalizer(G,K);;
gap> epi:=NaturalHomomorphismByNormalSubgroup(N,K);;
gap> QHK:=Image(epi,H);;
gap> gq:=MinimalGeneratingSet(QHK)[1];;
gap> C:=CyclotomicClasses(Size(F),Index(H,K))[2];;
gap> P:=PrimitiveIdempotentsTrivialTwisting(FG,H,K,C,[epi,gq]);;
gap> e:=P[3];;   
gap> Ge := List(G,g->g*e);; time;
2156
gap> B := List(Ge,x->CodeWordByGroupRingElement(F,S,x));;
gap> code := GeneratorMatCode(B,F);
a linear [55,10,1..20]16..27 code defined by generator matrix over GF(2)
gap> MinimumDistance(code);
20
gap> LowerBoundMinimumDistance(55,10,2);
23
gap> UpperBoundMinimumDistance(55,10,2);
24
gap> DisplayProfile();
  count  self/ms  chld/ms  stor/kb  chld/kb  package  function              
      2      332        0    26433        0  (oprt.)  StrongShodaPairs      
     55    11680        0  3371416        0  (oprt.)  CodeWordByGroupRingElement
      1    59568        0  179861*        0  (oprt.)  PrimitiveIdempotentsTrivial*
               8               195                    OTHER                 
           71588           213842*                    TOTAL
\end{lstlisting}
In this way we have constructed a $[55,10,20]$-code by means of a minimal left metacyclic group code. By a computer search we were able to check that this code cannot be realized as an abelian group code. 
\end{example}

\section{Conclusions}

We list a table of minimal left group codes with best known minimal distance. The second column displays the group identification number in GAP. The last column displays the time in milliseconds needed to compute the code using our implementation.

\begin{center}
\begin{tabular}{|ccccc|}
 \hline 
$\F$ & $G$ & $k$ & $d_{\mbox{min}}$ & time\\
\hline
\verb+GF(2)+ & \verb+[ 21, 1 ]+ & 3 & 12 & 264 \\

\verb+GF(2)+ & \verb+[ 63, 1 ]+ & 3 & 36 & 300\\

\verb+GF(2)+ & \verb+[ 105, 1 ]+ & 3 & 60 & 520\\

\verb+GF(3)+ & \verb+[ 8, 4 ]+ & 2 & 6 & 144\\

\verb+GF(3)+ & \verb+[ 16, 4 ]+ & 2 & 12 & 268\\

\verb+GF(3)+ & \verb+[ 20, 3 ]+ & 4 & 12 & 280\\

\verb+GF(3)+ & \verb+[ 32, 2 ]+& 2 & 24 & 528\\

\verb+GF(3)+ & \verb+[ 40, 11 ]+ & 2 & 30 & 352\\

\verb+GF(3)+ & \verb+[ 56, 10 ]+ & 2 & 42 & 384\\

\verb+GF(3)+ & \verb+[ 64, 6 ]+ & 2 & 48 & 780\\

\verb+GF(3)+ & \verb+[ 80, 6 ]+ & 2 & 60 & 800\\
\hline
\end{tabular}
\end{center}

The next table contains minimal left group codes which cannot be realized as abelian group codes. The fifth column displays the maximum minimal distance achieved as can be found in \cite{1998Brouwer} or \cite{2007Grassl}.

\begin{center}
\begin{tabular}{|cccccc|}
 \hline
$\F$ & $G$ & $k$ & $d_{\mbox{min}}$ & best $d_{\mbox{min}}$ & time\\
\hline

\verb+GF(2)+ & \verb+[ 39, 1 ]+ & 12 & 6 & 14 & 604\\

\verb+GF(2)+ & \verb+[ 55, 1 ]+ & 10 & 20 & 23 & 700\\

\verb+GF(2)+ & \verb+[ 105, 1 ]+ & 12 & 36 & 44 & 1012\\

\verb+GF(3)+ & \verb+[ 20, 3 ]+ & 4 & 8 & 12 & 304\\

\verb+GF(3)+ & \verb+[ 40, 3 ]+ & 4 & 16 & 27 & 472\\

\verb+GF(4)+ & \verb+[ 39, 1 ]+ & 6 & 24 & 25 & 304\\

\verb+GF(4)+ & \verb+[ 55, 1 ]+ & 5 & 35 & 39 & 496\\

\verb+GF(5)+ & \verb+[ 21, 1 ]+ & 6 & 8 & 12 & 300\\

\hline
\end{tabular}
\end{center}

As answer to a question from \cite{2009BernalRioSimon}, Garc\'ia Pillado et.al. \cite{Pillado2013} constructed a two-sided group code over $\F_5$ which is not an abelian group code. More specifically, this code was realized by the group $S_4$ and is a $[24,9,8]$-code. They also proved that over $\F_5$ this code has the smallest possible length among all non-abelian group codes. This two-sided example of \cite{Pillado2013} can also be found using the construction of primitive central idempotents in Wedderga.

For left group codes, such a minimal length is not known. However, over $\F_5$, we constructed the left group code $[21,6,8]$ which is not an abelian group code.

We were able to achieve the desired goal of finding some optimal codes and non-abelian left group codes among the minimal left group codes. However all optimal codes found are well known and have small dimensions. This is due to the facts that we only considered minimal codes coming from semisimple group algebras. We also had computer memory limitations for searching through bigger groups, i.e. codes of higher length. Furthermore we were limited by the internal description of big fields in GAP. 
For groups up to order about 100 our methods to compute codes seem to be quite efficient, however testing if they can be realized by abelian groups is highly time consuming.
Still there is the hope to obtain many optimal codes with left group code structure and more left group codes which are not abelian group codes.

\renewcommand{\bibname}{References}
\bibliographystyle{amsalpha}
\bibliography{references}

\newcommand{\etalchar}[1]{$^{#1}$}
\providecommand{\bysame}{\leavevmode\hbox to3em{\hrulefill}\thinspace}
\providecommand{\MR}{\relax\ifhmode\unskip\space\fi MR }
\providecommand{\MRhref}[2]{%
  \href{http://www.ams.org/mathscinet-getitem?mr=#1}{#2}
}
\providecommand{\href}[2]{#2}
\begin{thebibliography}{JdROVG13}

\bibitem[Art73]{1973Artin}
E.~Artin, \emph{Galoissche {T}heorie}, Verlag Harri Deutsch, Zurich, 1973.

\bibitem[BBC{\etalchar{+}}12]{GUAVA}
R.~Baart, T.~Boothby, J.~Cramwinckel, J.~Fields, D.~Joyner, R.~Miller,
  E.~Minkes, E.~Roijackers, L.~Ruscio, and C.~Tjhai, \emph{{\textit{GUAVA - a
  GAP package}}}, Version 3.12, 21/05/2012,
  {\url{http://www.gap-system.org/Packages/guava.html}}.

\bibitem[BdR07]{Broche2007}
O.~Broche and {\'A}.~del R\'io, \emph{Wedderburn decomposition of finite group
  algebras}, Finite Fields Appl. \textbf{13} (2007), no.~1, 71--79.

\bibitem[BdRS09]{2009BernalRioSimon}
J.J. Bernal, {\'A}.~del R{\'{\i}}o, and J.J. Sim{\'o}n, \emph{An intrinsical
  description of group codes}, Des. Codes Cryptogr. \textbf{51} (2009), no.~3,
  289--300.

\bibitem[BHK{\etalchar{+}}13]{Wedderga}
O.~Broche, A.~Herman, A.~Konovalov, A.~Olivieri, G.~Olteanu, {\'A}.~del
  R{\'i}o, and I.~{Van Gelder}, \emph{{\textit{Wedderga - Wedderburn
  Decomposition of Group Algebras}}}, Version 4.6.0, 2013,
  {\url{http://www.cs.st-andrews.ac.uk/~alexk/wedderga},
  \url{http://www.gap-system.org/Packages/wedderga.html}}.

\bibitem[Bro98]{1998Brouwer}
A.E. Brouwer, \emph{{Bounds on the size of linear codes}}, Handbook of Coding
  Theory (Vera~S. Pless and W.Cary Huffman, eds.), Elsevier, Amsterdam, 1998,
  pp.~295--461.

\bibitem[CS89]{1989ChengSloane}
Y.~Cheng and N.J.A. Sloane, \emph{Codes from symmetry groups and a [32,17,8]
  code}, SIAM J. Disc. Math \textbf{2} (1989), 28--37.

\bibitem[Gao93]{1993Gao}
S.~Gao, \emph{Normal bases over finite fields}, Ph.D. thesis, University of
  Waterloo, Waterloo, 1993.

\bibitem[GAP13]{GAP}
The GAP~Group, \emph{{GAP -- Groups, Algorithms, and Programming, Version
  4.6.5}}, 2013, \url{http://www.gap-system.org}.

\bibitem[GPGM{\etalchar{+}}13]{Pillado2013}
C.~Garc\'ia~Pillado, S.~Gonz\'alez, C.~Martin\'ez, V.~Markov, and A.~Nechaev,
  \emph{Group codes over non-abelian groups}, J. Algebra Appl. \textbf{12}
  (2013), no.~7, 1350037 (20 pages).

\bibitem[Gra06]{2006Grassl}
M.~Grassl, \emph{{Searching for linear codes with large minimum distance}},
  Discovering Mathematics with Magma --- Reducing the Abstract to the Concrete
  (Wieb Bosma and John Cannon, eds.), Algorithms and Computation in
  Mathematics, vol.~19, Springer, Heidelberg, 2006, pp.~287--313.

\bibitem[Gra07]{2007Grassl}
\bysame, \emph{{Bounds on the minimum distance of linear codes and quantum
  codes}}, Online available at \url{http://www.codetables.de}, 2007, Accessed
  on 2013-09-15.

\bibitem[JdROVG13]{2013JdROVG}
E.~Jespers, {\'A}.~del R{\'i}o, G.~Olteanu, and I.~Van~Gelder, \emph{Group
  rings of finite strongly monomial groups: Central units and primitive
  idempotents}, J. Algebra \textbf{387} (2013), 99--116.

\bibitem[JLP03]{Jespers2003}
E.~Jespers, G.~Leal, and A.~Paques, \emph{Central idempotents in the rational
  group algebra of a finite nilpotent group}, J. Algebra Appl. \textbf{2}
  (2003), no.~1, 57--62.

\bibitem[Len91]{1991Lenstra}
H.~W. Lenstra, \emph{Finding isomorphisms between finite fields}, Math. Comp.
  \textbf{56} (1991), no.~193, 329--347.

\bibitem[L{\"u}n86]{1985Luneburg}
H.~L{\"u}neburg, \emph{On a little but useful algorithm}, Proceedings of the
  3rd International Conference on Algebraic Algorithms and Error-Correcting
  Codes (London, UK), Springer-Verlag, 1986, pp.~296--301.

\bibitem[OdRS04]{Olivieri2004}
A.~Olivieri, {\'A}.~del R{\'i}o, and J.J. Sim{\'o}n, \emph{On monomial
  characters and central idempotents of rational group algebras}, Comm. Algebra
  \textbf{32} (2004), no.~4, 1531--1550.

\bibitem[OVG11]{2011vangelder}
G.~Olteanu and I.~Van~Gelder, \emph{Finite group algebras of nilpotent groups:
  A complete set of orthogonal primitive idempotents}, Finite Fields Appl.
  \textbf{17} (2011), no.~2, 157--165.

\bibitem[Pas89]{Passman1989}
D.S. Passman, \emph{Infinite crossed products}, Pure Appl. Math., vol. 135,
  Academic Press, Boston, 1989.

\bibitem[Rei75]{Reiner1975}
I.~Reiner, \emph{Maximal orders}, Academic Press, London, New York, San
  Fransisco, 1975.

\bibitem[Rom06]{Roman2006}
S.~Roman, \emph{Field theory}, Graduate Texts in Mathematics, vol. 158,
  Springer, New York, 2006.

\bibitem[SL95]{sabin}
R.E. Sabin and S.J. Lomonaco, \emph{Metacyclic error-correcting codes}, Appl.
  Algebra Engrg. Comm. Comput. \textbf{6} (1995), no.~3, 191--210.

\bibitem[Yam73]{Yamada1973}
T.~Yamada, \emph{The {S}chur subgroup of the {B}rauer group}, Lect. Notes Math,
  vol. 397, Springer-Verlag, 1973.

\end{thebibliography}

\end{document}